\theoremstyle{plain}
\newtheorem{theorem}{Theorem}[section]
\newtheorem{corollary}{Corollary}[section]
\newtheorem{lemma}{Lemma}[section]
\newtheorem{proposition}{Proposition}[section]
\newtheorem{question}{Question}[section]
\newtheorem{example}{Example}[section]
\def\E{\operatorname{E}}
\def\F{\operatorname{F}}
\def\G{\operatorname{G}}
\def\GL{\operatorname{GL}}
\def\GSp{\operatorname{GSp}}
\def\HSpin{\operatorname{HSpin}}
\def\Pin{\operatorname{Pin}}
\def\PO{\operatorname{PO}}
\def\PSL{\operatorname{PSL}}
\def\PSO{\operatorname{PSO}}
\def\PSp{\operatorname{PSp}}
\def\PSU{\operatorname{PSU}}
\def\PU{\operatorname{PU}}
\def\O{\operatorname{O}}
\def\SL{\operatorname{SL}}
\def\SO{\operatorname{SO}}
\def\Sp{\operatorname{Sp}}
\def\SU{\operatorname{SU}}
\def\U{\operatorname{U}}
\def\Ad{\operatorname{Ad}}
\def\ad{\operatorname{ad}}
\def\Clif{\operatorname{Clif}}
\def\diag{\operatorname{diag}}
\def\der{\operatorname{der}}
\def\Gal{\operatorname{Gal}}
\def\id{\operatorname{id}}
\def\Im{\operatorname{Im}}
\def\Map{\operatorname{Map}}
\def\pr{\operatorname{pr}}
\def\Spin{\operatorname{Spin}}
\def\tr{\operatorname{tr}}
\begin{document}

\title{Acceptable compact Lie groups}
\thanks{}

\author{Jun Yu}
\address{Jun Yu, Beijing International Center for Mathematical Research, Peking University, No. 5 Yiheyuan Road, Beijing 100871,
China.}
\email{junyu@math.pku.edu.cn}
\keywords{Element-conjugacy, global conjugacy, acceptable group, pseudocharacter.}
\subjclass[2010]{22E46.}

\begin{abstract}
In this paper we show that for a connected compact Lie group to be acceptable it is necessary and sufficient that its derived
subgroup is isomorphic to a direct product of the groups $\SU(n)$, $\Sp(n)$, $\SO(2n+1)$, $\G_2$, $\SO(4)$. We show that there
are invariant functions on $\SO_{4}(\mathbb{C})^{2}$ which are not generated by 1-argument invariants, though the group
$\SO_{4}(\mathbb{C})$ is acceptable.
\end{abstract}

\maketitle

\tableofcontents

\section{Introduction}

Let $G$ be a Lie group and $\Gamma$ be a group. Two homomorphisms $\phi_{1},\phi_{2}:\Gamma\rightarrow G$ are said to be
element-conjugate if $\phi_{1}(x)\sim\phi_{2}(x)\ (\forall x\in\Gamma)$. They are said to be globally conjugate if there
exists $g\in G$ such that $\phi_{2}(x)=g\phi_{1}(x)g^{-1}$ ($\forall x\in\Gamma$). We call a Lie group $G$ acceptable if
element-conjugacy implies global conjugacy for every finite group $\Gamma$ and every pair of homomorphisms $\phi_{1},
\phi_{2}: \Gamma\rightarrow G$; otherwise we call $G$ unacceptable. We call a Lie group $G$ strongly acceptable if
element-conjugacy implies global conjugacy for every group $\Gamma$ and every pair of homomorphisms $\Gamma\rightarrow G$.

The notions of ``acceptable groups" and ``unacceptable groups" are defined by Michael Larsen. In \cite{Larsen1} and
\cite{Larsen2}, Larsen showed that many connected compact Lie groups are acceptable (resp. unacceptable). Particularly,
he classified compact connected and simply-connected Lie groups which are acceptable. In \cite{Fang-Han-Sun} a notion
weaker than``strongly acceptable" was studied where they considered only homomorphisms from connected compact Lie groups
to a given Lie group. In this paper, we take a further study by classifying acceptable connected compact Lie groups,
which amounts to the following Theorem \ref{T1}. We also show in Section \ref{S:nonC} that a few non-connected compact
Lie groups are unacceptable.

\begin{theorem}\label{T1}
Let $G$ be a connected compact Lie group. For $G$ to be acceptable it is necessary and sufficient that its derived subgroup
$G_{\der}=[G,G]$ is isomorphic to a direct product of the following groups:
\[\SU(n),\ \Sp(n),\ \SO(2n+1),\ \G_2,\ \SO(4).\] On the other hand, if $G_{\der}$ is isomorphic to a direct product of the
above groups, then $G$ is strongly acceptable.
\end{theorem}

An immediate consequence of Theorem \ref{T1} is: a connected compact Lie group is acceptable if and only if it is strongly
acceptable. We don't know if this statement holds for non-connected compact Lie groups.

Let us explain the idea of proof briefly. Besides tools invented by Larsen, a new fact which is used frequently in this
paper is the following: if a compact Lie group $G$ is acceptable (resp. strongly acceptable), then $Z_{G}(A)$ is also
acceptable (resp. strongly acceptable) for any closed abelian subgroup $A$ of $G$. With this fact, we deduce the strong
acceptability of $\SO(4)$ from that of $\G_{2}$. We show unacceptability of the following groups: \[\SO(6)\ (\textrm{due to
Weidner (\cite{Weidner})}),\] \[\Sp(1)^{m}/\langle(-1,\dots,-1)\rangle\ (m\geq 3),\] \[\Sp(1)^{3}/\langle(1,-1,-1),(-1,1,-1)
\rangle.\] These are new findings after \cite{Larsen1}, \cite{Larsen2}. With the above fact, we show many compact Lie groups
are unacceptable by reducing to the unacceptability of these three particular groups.

The element-conjugacy vs. global conjugacy question is important for determining the monodromy group of a Galois homomorphism
(\cite{Arthur},\cite{Chenevier}). It is also closely related to the multiplicity one problem in automorphic form theory
(\cite{Arthur},\cite{Wang}) and pseudocharacters in number theory (\cite{Bockle-Harris-Khare-Thorne},\cite{Weidner}). The
latter is an important tool for proving modularity lifting theorems  (\cite{Taylor},\cite{Bockle-Harris-Khare-Thorne}). For
example, for any odd prime $p$ it is known that the cuspidal spectrum of $\SL_{p}(\mathbb{A})$ fails to have the multiplicity
one property as reflected by the existence of non-globally conjugate but element-conjugate homomorphisms from
$(\mathbb{Z}/p\mathbb{Z})^{2}$ to $\PSL_{p}(\mathbb{C})$ (\cite{Blasius}, \cite{Lapid}). In general, it is expected that if
the Langlands $L$-group $^{L}G=\hat{G}\rtimes\Gal(E/F)$ is unacceptable, then the cuspidal spectrum of $G(\mathbb{A}_{F})$
fails to have the multiplicity one property. The $L$-group of split $\SO_{4}$ over a number field $F$ is $\SO_{4}(\mathbb{C})$.
It is known that $\SO_{4}(\mathbb{A}_{F})$ does have the multiplicity one property (\cite{Lapid-Mao}), which is consistent with
the acceptability of $\SO_{4}$ shown in this paper. For more discussions on the relation between acceptability, pseudocharacters
and invariant function rings, the reader is encouraged to read the last section. We show that there are invariant functions
on $\SO_{4}(\mathbb{C})^{2}$ which are not generated by 1-argument invariants, though the group $\SO_{4}(\mathbb{C})$
is acceptable.

\smallskip

\noindent{\it Notation and conventions.} Let $G$ be a group. Write $Z(G)$ for the subgroup of central elements of $G$,
write $G_{\der}=[G,G]$ for the derived subgroup of $G$; write $G^{0}$ for the neutral subgroup of $G$ when $G$ is
a Lie group. Write $Z_{G}(A)=\{g\in G:gxg^{-1}x^{-1}=1 (\forall x\in A)\}$ for the centralizer of a subset $A\subset G$,
and write $N_{G}(A)=\{g\in G:gAg^{-1}=A\}$ for the normalizer of $A$ in $G$. When $A=\{x\}$ consists of a single element,
we also write $G^{x}$ or $Z_{G}(x)$ for $Z_{G}(\{x\})$. For any element $g\in G$, write $\Ad(g): G\rightarrow G$ for
the automorphism of $G$ defined by $\Ad(g)(x)=gxg^{-1}$ ($\forall x\in G$). For two elements $x,y\in G$, we write
$x\sim y$ if they are conjugate in $G$, i.e., if there exists $g\in G$ such that $y=gxg^{-1}$; write $x\sim_{H} y$
if they are conjugate by an element in a subgroup $H$.

For two closed subgroups $G_{1},G_{2}$ of a Lie group $G$, we write $G=G_{1}\cdot G_{2}$ if $g_{1}g_{2}=g_{2}g_{1}$
($\forall(g_{1},g_{2})\in G_{1}\times G_{2}$), any element $g\in G$ admits a decomposition $g=g_{1}g_{2}$ for some pair
$(g_{1},g_{2})\in G_{1}\times G_{2}$, and $G_{1}\cap G_{2}$ is a finite subgroup of $Z(G)$. We write $G=G_{1}\times G_{2}$
if furthermore $G_{1}\cap G_{2}=1$. In this case we say that $G$ is the direct product of $G_{1}$ and $G_{2}$.

Write $I_{n}$ for an $n\times n$ identity matrix. Let $\U(n)$ (resp. $\O(n)$) denote the unitary group (resp. orthogonal
group) of degree $n$. Then, $\SU(n):=\{X\in\U(n):\det X=1\}$, $\PU(n):=\U(n)/Z(\U(n))$, $\PSU(n):=\SU(n)/Z(\SU(n))$,
$\SO(n):=\{X\in\O(n):\det X=1\}$, $\PO(n):=\O(n)/Z(\O(n))$, $\PSO(n):=\SO(n)/Z(\SO(n))$.

Let $V$ be an $n$-dimensional Euclidean space with an orthonormal basis $\{e_{1},e_{2},\dots,e_{n}\}$. Then we have
the Clifford algebra $\Clif(V)$ (which is an associative algebra with unity) generated by $V$ subject to the relations
\[e_{i}e_{j}+e_{j}e_{i}=-2\delta_{i,j}\] where $\delta_{i,j}=\left\{\begin{array}{cc}1\textrm{ if }i=j;\\0\textrm{ if }
i\neq j.\end{array}\right.$ In $\Clif(V)$, we have the pin group $\Pin(V)$ generated by $\{u\in V: |u|=1\}$, which
is regarded as a subgroup of the group of invertible elements in $\Clif(V)$. Note that $u^2=-1$ for any element
$u\in V$ with $|u|=1$. The spin group $\Spin(V)$ is generated by $\{uv:u,v\in V, |u|=|v|=1\}$. The pin group $\Pin(V)$
has two connected components; $\Spin(V)$ is its neutral subgroup, hence connected. In $\Pin(V)$, write
\[c=e_{1}\cdots e_{n}.\] Then, $c\in\Spin(V)$ if and only if $n$ is even, and $c$ is a central element of $\Spin(V)$
when $n$ is even. When $\dim V\geq 3$, $\Spin(V)$ is also simply-connected. Put $n=\dim V$. Then, let $\Pin(n),\Spin(n)$
denote for $\Pin(V),\Spin(V)$ respectively. When $4|n$, define the half-spin group by $\HSpin(n)=\Spin(n)/\langle c\rangle$.

Let $\mathbb{H}=\{a+b\mathbf{i}+c\mathbf{j}+d\mathbf{k}:a,b,c,d\in\mathbb{R}\}$ be the quaternion algebra, where the
generators $\mathbf{i},\mathbf{j},\mathbf{k}$ are subject to relations $\mathbf{i}^2=\mathbf{j}^2=\mathbf{k}^2=-1$,
$\mathbf{i}\mathbf{j}=-\mathbf{j}\mathbf{i}=\mathbf{k}$, $\mathbf{k}\mathbf{i}=-\mathbf{i}\mathbf{k}=\mathbf{j}$,
$\mathbf{j}\mathbf{k}=-\mathbf{k}\mathbf{j}=\mathbf{i}$. In $\mathbb{H}$, we have the conjugation
$\bar{}:\mathbb{H}\rightarrow\mathbb{H}$ (which is an anti-automorphism of $\mathbb{H}$ as a $\mathbb{R}$-algebra)
defined by \[\overline{a+b\mathbf{i}+c\mathbf{j}+d\mathbf{k}}=a-b\mathbf{i}-c\mathbf{j}-d\mathbf{k}.\] Then, the
symplectic group $\Sp(n)$ is defined by \[\Sp(n)=\{X\in M_{n}(\mathbb{H}):X\bar{X}^{t}=I\},\] where $X^{t}$ means
the transposition of a matrix $X$. In particular, \[\Sp(1)=\{a+b\mathbf{i}+c\mathbf{j}+d\mathbf{k}\in\mathbb{H}:
a^2+b^2+c^2+d^2=1\}.\] The projective symplectic group of degree $n$ is $\PSp(n):=\Sp(n)/\langle-I\rangle$.

Let $\E_{6}$ or $\E_{6}^{sc}$ denote a connected and simply-connected compact simple Lie group of type $\mathbf{E}_6$,
and let $\E_{6}^{ad}$ denote an adjoint type  connected compact simple Lie group of type $\mathbf{E}_6$. Similarly,
we have notations $\E_{7}$, $\E_{7}^{sc}$, $\E_{7}^{ad}$ for type $\mathbf{E}_7$. Let $\E_{8},\F_{4},\G_{2}$ denote
a connected compact simple Lie group of type $\mathbf{E}_8,\mathbf{F}_4,\mathbf{G}_2$ respectively, which are both
simply-connected and of adjoint type.

For a positive integer $m$, write $\mu_{m}=\{\lambda\in\mathbb{C}:\lambda^{m}=1\}$, which is a cyclic group of
order $m$. For simplicity we also let $\mu_{m}$ denote $\{\lambda I_{n}:\lambda^{m}=1\}$ for any $n\geq 1$, which
is an order $m$ central subgroup of $\U(n)$ and is contained in $\SU(n)$ if and only if $m|n$.

Write $[x]$ for the class of an element $x\in G$ in a quotient group $G/N$; and write $[x,y]=xyx^{-1}y^{-1}$ for the
commutator of two elements $x,y\in G$.

\smallskip

\noindent{\it Acknowledgements.} I would like to thank Xinwen Zhu for bringing up this question to my attention,
and to thank Wee Teck Gan, Ga\"etan Chenevier and Matthew Weidner for helpful communications. I would like to
thank the referees for very helpful comments and suggestions. This research is partially supported by the NSFC
Grant 11971036.

\section{A classification of acceptable connected compact Lie groups}\label{S:acceptC}

\subsection{Reductions}

The following lemma \ref{L:product} is obvious.

\begin{lemma}\label{L:product}
If a compact Lie group $G=G_{1}\cdot G_{2}$ is acceptable (resp. strongly acceptable), then both $G_1$ and $G_2$ are.
If this is a direct product, then the converse also holds true.
\end{lemma}

The following lemma \ref{L:Larsen} is a variance of \cite[Prop. 1.4]{Larsen1}. For readers' convenience we give a proof,
which is identical to the proof of Larsen in \cite{Larsen1}.

\begin{lemma}[]\label{L:Larsen}
Let $H$ be a closed subgroup of a compact Lie group $G$ such that $G=Z(G)\cdot H$. Then $G$ is acceptable (resp. strongly
acceptable) if and only if $H$ is. In particular, a connected compact Lie group $G$ is acceptable (resp. strongly
acceptable) if and only if $G_{\der}$ is.
\end{lemma}

\begin{proof}
We first show for strong acceptability in the first statement. Assume that $H$ is strongly acceptable.
Let $\Gamma$ be a group and $\phi_{1},\phi_{2}:\Gamma\rightarrow G$ be two element-conjugate homomorphisms.
Let $\pi:G\rightarrow G/Z(G)$ be the projection map. Define\[\Gamma'=\{(\gamma,x)\in\Gamma\times H:
\pi(\phi_{2}(\gamma))=\pi(x)\}.\] For any $y=(\gamma,x)\in\Gamma'$, put $\eta(y)=\phi_{2}(\gamma)x^{-1}
\in Z(G)$. Then, $\eta:\Gamma'\rightarrow Z(G)$ is a homomorphism. Define $\tilde{\phi}_{2}(\gamma,x)=x=
\phi_{2}(\gamma)\eta(\gamma,x)^{-1}$ and $\tilde{\phi}_{1}(\gamma,x)=\phi_{1}(\gamma)\eta(\gamma,x)^{-1}$
($\forall(\gamma,x)\in\Gamma'$). Then, $\tilde{\phi}_{1}$ and $\tilde{\phi}_{2}$ are element-conjugate as
$\phi_{1}$ and $\phi_{2}$ are. Since $H$ is a normal subgroup of $G$, we have $\Im\tilde{\phi}_{1}\subset H$
as $\Im\tilde{\phi}_{2}\subset H$. Then, $\tilde{\phi}_{1},\tilde{\phi}_{2}:\Gamma'\rightarrow H$ are element-conjugate
homomorphisms. As $H$ is strongly acceptable, there exists $g\in H$ such that $\tilde{\phi}_{1}=\Ad(g)\circ
\tilde{\phi}_{2}$. Since $\Gamma'\twoheadrightarrow\Gamma$, then $\phi_{1}=\Ad(g)\circ\phi_{2}$. Thus, $G$ is
strongly acceptable.

The proof of the acceptability in the first statement is shown in the same line as for strong acceptability.
Just note that $\ker(H\rightarrow G/Z(G))=H\cap Z(G)$ is a finite group by assumption. Then $\Gamma'$ is a finite
group whenever $\Gamma$ is.

The second statement follows from the first since $G=Z(G)\cdot G_{\der}$ and $Z(G)\cap G_{\der}$ is a finite group
for any connected compact Lie group $G$.
\end{proof}

The following lemma \ref{L:centralizer2} is crucial for many arguments in this paper.

\begin{lemma}\label{L:centralizer2}
If a compact Lie group $G$ is acceptable (resp. strongly acceptable), then $Z_{G}(A)$ is also acceptable (resp. strongly
acceptable) for any closed abelian subgroup $A$ of $G$.
\end{lemma}

\begin{proof}
We first give the proof for strong acceptability. Assume that $G$ is strongly acceptable. Write $H=Z_{G}(A)$. Let $\Gamma$
be a group, and $\phi_{1},\phi_{2}:\Gamma\rightarrow H$ be two element-conjugate homomorphisms. Put $\Gamma'=\Gamma\times A$.
Define $\phi'_{i}: \Gamma'\rightarrow G$ ($i=1,2$) by $$\phi'_{i}(\gamma,a)=\phi_{i}(\gamma)a,\ \forall\gamma\in\Gamma,
a\in A.$$ Apparently, $\phi'_{1}$ and $\phi'_{2}$ are element-conjugate homomorphisms. Since $G$ is strongly acceptable,
there exists $g\in G$ such that $$\phi'_{2}(\gamma,a)=\Ad(g)(\phi'_{1}(\gamma,a)),\ \forall(\gamma,a)\in\Gamma'.$$ Applying
to $\gamma=1$ and $a\in A$, we get $g\in Z_{G}(A)=H$. Applying to $a=1$ and $\gamma\in\Gamma$, we get $\phi_{2}(\gamma)=
\Ad(g)(\phi_{1}(\gamma))$ for any $\gamma\in\Gamma$. This just means: $\phi_1$ and $\phi_2$ are globally conjugate. Thus,
$H$ is strongly acceptable.

For the acceptability, there exists $m>1$ such that $Z_{G}(A(m))=Z_{G}(A)$, where $A(m)=\{x\in A: x^{m}=1\}.$ Using $A(m)$
instead of $A$ in the above argument, the proof proceeds the same.
\end{proof}

It is clear that the following variance of Lemma \ref{L:centralizer2} holds true: if a Lie group $G$ is acceptable,
then $Z_{G}(A)$ is also acceptable for any finite subgroup $A$ of $G$; if a Lie group $G$ is strongly acceptable, then
$Z_{G}(A)$ is also strongly acceptable for any subgroup $A$ of $G$. In this paper, we only need the case when $A$ is a
closed abelian subgroup, so we state Lemma \ref{L:centralizer2} as that.

\smallskip

While applying Lemma \ref{L:centralizer2}, we frequently calculate the fixed point subgroup of an automorphism
or the centralizer of a torus, for which the following several well-known results are very useful.

\begin{theorem}[\cite{Steinberg}]\label{T:Steinberg}
Let $G$ be a connected and simply-connected compact Lie group and $\theta$ be an automorphism of $G$. Then the fixed
point subgroup $G^{\theta}$ is connected.
\end{theorem}

\begin{theorem}[\cite{Knapp}]\label{T:Knapp}
Let $A$ be a torus in a connected compact Lie group $G$. Then $Z_{G}(A)$ is connected.
\end{theorem}

\begin{theorem}[\cite{Borel-Friedman-Morgan}]\label{L:Borel}
If $L$ is a Levi subgroup of a connected and simply-connected compact Lie group $G$, then $L_{\der}$ is simply-connected.
\end{theorem}

In \cite{Borel-Friedman-Morgan}, one can also find useful methods to calculate the fixed point subgroup $G^{\theta}$ (for
$\theta$ an automorphism) and useful characterization of $L_{\der}$ (for $L$ a Levi subgroup) when $G$ is non-simply
connected.

\smallskip

By the following lemma \ref{L:acceptable1}, to check strong acceptability of a compact Lie group it suffices to check
for continuous homomorphisms from compact Lie groups. The proof of Lemma \ref{L:acceptable1} below is suggested by a
referee, which improves the statement of the lemma and enables us to change the definition of strong acceptability
by allowing homomorphisms from any group. Previously, we consider only continuous homomorphisms from compact topological
groups while defining strong acceptability.

\begin{lemma}\label{L:acceptable1}
For a compact Lie group $G$ to be strongly acceptable it is necessary and sufficient that for any compact Lie group
$H$ and every pair of element-conjugate continuous homomorphisms $\phi_1,\phi_2:H\rightarrow G$, $\phi_1$ and $\phi_2$
are globally conjugate.
\end{lemma}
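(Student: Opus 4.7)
The forward implication is immediate: compact Lie groups are compact groups, so strongly acceptable trivially gives the conclusion. The real content is the converse, so I would focus there.

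For the converse, let $\Gamma$ be a compact (topological) group, and suppose $\phi_1,\phi_2:\Gamma\to G$ are element-conjugate. My plan is to reduce to the compact Lie group case by factoring through the common kernel. First I would observe that element-conjugacy forces $\ker\phi_1=\ker\phi_2$: the conjugacy class of the identity in $G$ is $\{e\}$, so $\phi_1(x)=e$ iff $\phi_1(x)\sim\phi_2(x)=e$, and symmetrically. Call this common kernel $N$; it is a closed normal subgroup of $\Gamma$, and the induced maps $\bar\phi_1,\bar\phi_2:\Gamma/N\to G$ are injective continuous homomorphisms, still element-conjugate by construction.

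Next I would upgrade the quotient $\Gamma/N$ from a compact topological group to a compact Lie group. The image $\bar\phi_1(\Gamma/N)=\phi_1(\Gamma)$ is a compact, hence closed, subgroup of the Lie group $G$, so it is itself a (compact) Lie subgroup by the standard closed-subgroup theorem. Since $\bar\phi_1:\Gamma/N\to\phi_1(\Gamma)$ is a continuous bijective homomorphism from a compact space to a Hausdorff space, it is automatically a homeomorphism, and therefore an isomorphism of topological groups. Transporting the Lie structure from $\phi_1(\Gamma)$ back across $\bar\phi_1$, we conclude that $H:=\Gamma/N$ is a compact Lie group.

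Now the hypothesis applies directly: $\bar\phi_1,\bar\phi_2:H\to G$ are element-conjugate homomorphisms out of a compact Lie group $H$, so there exists $g\in G$ with $\bar\phi_2(y)=\Ad(g)(\bar\phi_1(y))$ for all $y\in H$. Composing with the quotient $\Gamma\to\Gamma/N=H$ gives $\phi_2(x)=\Ad(g)(\phi_1(x))$ for all $x\in\Gamma$, proving strong acceptability of $G$.

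The only step requiring care is identifying $\Gamma/N$ as a Lie group; this is where the hypothesis on $G$ being a Lie group is essential, via the closed-subgroup theorem. Everything else is routine factoring and a compactness argument to promote a continuous bijection to a homeomorphism.
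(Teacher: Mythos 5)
Your argument is correct under the standing convention that homomorphisms of topological groups are continuous, and it reaches the conclusion by a genuinely different and more elementary route than the paper. You quotient by the common kernel $N=\ker\phi_1=\ker\phi_2$, note that $\phi_1(\Gamma)$ is compact, hence closed, hence a Lie subgroup of $G$ by the closed-subgroup theorem, and use the compact-to-Hausdorff trick to make $\bar\phi_1:\Gamma/N\to\phi_1(\Gamma)$ a topological isomorphism, so the hypothesis applies to $\bar\phi_1,\bar\phi_2$ out of the compact Lie group $\Gamma/N$. The paper instead works with $H=\overline{\phi_1(\Gamma)}$ and never invokes continuity of $\phi_1,\phi_2$: it first records the property that $g_n\to 1$ in $G$ if and only if $[g_n]\to[1]$ in the space of conjugacy classes, then uses element-conjugacy together with this property to show that $\{\phi_2(\gamma_n)\}$ is a Cauchy sequence whenever $\phi_1(\gamma_n)$ converges; this manufactures an injection $\psi':H\to G$, element-conjugate to the inclusion $\psi:H\hookrightarrow G$ and satisfying $\phi_2=\psi'\circ\phi_1$, to which the hypothesis is applied. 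Your route buys simplicity: no conjugacy-class space and no limiting construction. The paper's route buys robustness: since $\psi'$ is built from element-conjugacy alone, the argument does not presuppose that $\phi_1(\Gamma)$ is closed or that $\bar\phi_1$ is a homeomorphism, so it would survive even for homomorphisms that are not assumed continuous, where your appeals to compactness of the image and to the closed-subgroup theorem would fail. Since the paper's usage elsewhere (e.g. the cited results of Borel on maximal compact subgroups) presumes continuous homomorphisms, your proof is complete as written.
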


\begin{proof}
The necessarity is trivial. We prove the sufficiency. Suppose the condition holds for homomorphisms from compact Lie
groups. Let $\Gamma$ be a group and $\phi_1,\phi_2:\Gamma\rightarrow G$ be two element-conjugate homomorphisms. Firstly,
$\ker\phi_1=\ker\phi_2$. Considering $\Gamma/\ker\phi_{1}$ instead, we may assume that $\phi_1,\phi_2$ are injective.
Embed $\Gamma$ into $G\times G$ via $(\phi_1,\phi_2)$ and let $H$ be the closure of the image of this embedding. Then,
$H$ is a compact Lie group. Write $\phi=(\phi_1,\phi_2):\Gamma\rightarrow H$ and let $\pr_1,\pr_2:H\rightarrow G$ be
the projection from $H$ to the first and the second component of $G\times G$ respectively. Then, $\phi_1=
\pr_1\circ\phi$ and $\phi_2=\pr_2\circ\phi$. It suffices to show that: $\pr_1$ and $\pr_2$ are element-conjugate,
i.e., $x\sim y$ for any element $(x,y)\in H$. By definition $(x,y)$ is the limit of a sequence
$\{(g_{n},a_{n}g_{n}a_{n}^{-1})\}_{n=1}^{\infty}$. Replacing $\{(g_{n},a_{n}g_{n}a_{n}^{-1})\}_{n=1}^{\infty}$ by a
sub-sequence if necessary, we may assume that both sequences $\{g_{n}\}_{n=1}^{\infty}$ and $\{a_{n}\}_{n=1}^{\infty}$
converge. Put $g=\lim_{n\to\infty}g_{n}$ and $a=\lim_{n\to\infty}a_{n}.$ Then, $x=g$ and $y=aga^{-1}$. Thus, $x\sim y$.
\end{proof}

\subsection{Examples of acceptable compact Lie groups}

\begin{proposition}\label{P:acceptable1}
Any group $G$ in the following list is strongly acceptable: \begin{itemize}
\item[(i)]$\U(n)$;
\item[(ii)]$\SU(n)$;
\item[(iii)]$\Sp(n)$;
\item[(iv)]$\O(n)$;
\item[(v)]$\SO(2n+1)$;
\item[(vi)]$\G_2$;
\item[(vii)]$\SO(4)$.
\end{itemize}
\end{proposition}

\begin{proof}
The strong acceptability of $\U(n)$ follows from the character theory of representations of compact Lie groups, which
implies the strong acceptability of $\SU(n)$. The strong acceptability of $\Sp(n), \O(n), \G_{2}$ is shown in \cite{Griess}
and \cite{Larsen1}. Due to $\O(2n+1)=\SO(2n+1)\times\{\pm{I}\}$, it follows that $\SO(2n+1)$ is also strongly acceptable.
Take an involution $\theta\in\G_2$. Then, $\G_{2}^{\theta}\cong\SO(4)$ (\cite[Table2]{Huang-Yu}). By Lemma
\ref{L:centralizer2}, $\SO(4)$ is strongly acceptable as $\G_2$ is.
\end{proof}

Besides $\SO(4)$, the strong acceptability of all other groups in Prop. \ref{P:acceptable1} are well-known. The only new
finding of ours is the strong acceptability of $\SO(4)$.

\subsection{Examples of unacceptable compact Lie groups}

\begin{example}[Weidner, \cite{Weidner}]\label{E:SU4}
Let $G=\SU(4)/\langle-I\rangle$ or $\SU(4)/\langle\mathbf{i}I\rangle$, and let $\Gamma=(\mathbb{Z}/4\mathbb{Z})^{2}$
with two generators $\gamma_1,\gamma_2$. Define $\phi,\phi':\Gamma\rightarrow\SU(4)$ by \[\phi(\gamma_1)=
\diag\{1,1,\mathbf{i},-\mathbf{i}\},\quad\phi(\gamma_2)=\diag\{1,\mathbf{i},1,-\mathbf{i}\}\] and $\phi'(\gamma)=
\overline{\phi(\gamma)}$ ($\forall\gamma\in\Gamma$). Write $\pi:\SU(4)\rightarrow G$ for the natural projection. Set
$\rho=\pi\circ\phi$ and $\rho'=\pi\circ\phi'$. In the below, we show that $\rho$ and $\rho'$ are element-conjugate,
but not globally conjugate. Thus, $G$ is unacceptable.
\end{example}

\begin{proof}
Write $\gamma=\gamma_1^{a}\gamma_2^{b}$ ($a,b\in\{0,1,2,3\}$) for a general element of $\Gamma$. Then, $$\phi(\gamma)
=\diag\{1,\mathbf{i}^{b},\mathbf{i}^{a},(-\mathbf{i})^{a+b}\}.$$ When one of $a,b,a+b$ is a multiple of $4$, we have
$\phi'(\gamma)=\overline{\phi(\gamma)}\sim\phi(\gamma)$; when one of $a-2,b-2,a+b-2$ is a multiple of $4$, we have
$\phi'(\gamma)=\overline{\phi(\gamma)}\sim-\phi(\gamma)$. This covers all $\gamma\in\Gamma$. Then, $\rho'(\gamma)
\sim\rho(\gamma)$ ($\forall\gamma\in\Gamma$). Thus, $\rho$ and $\rho'$ are element-conjugate.

Suppose $\rho$ and $\rho'$ are globally conjugate. Then, there exists $g\in\SU(4)$ such that $\overline{\phi(\gamma_1)}
=\mathbf{i}^{t_{1}}g\phi(\gamma_1)g^{-1}$ and $\overline{\phi(\gamma_2)}=\mathbf{i}^{t_{2}}g\phi(\gamma_2)g^{-1},$
where $t_1,t_2\in\{0,1,2,3\}$. Since each of $\phi(\gamma_1),\phi(\gamma_2),\overline{\phi(\gamma_1)},
\overline{\phi(\gamma_{2})}$ has eigenvalue $1$ twice and has eigenvalues $\mathbf{i},-\mathbf{i}$ each once, then
$t_{1}=t_{2}=0$. Since $\overline{\phi(\gamma_{1})}^{2}=\phi(\gamma_{1})^{2}$ and $\overline{\phi(\gamma_{2})}^{2}
=\phi(\gamma_{2})^{2}$, then $g$ commutes with $\phi(\gamma_{1})^{2}=(1,1,-1,-1)$ and $\phi(\gamma_{2})^{2}=
(1,-1,1,-1)$. Then, $g$ is a diagonal matrix. Thus, $g$ commutes with $\phi(\gamma_1)$. Hence, $\phi(\gamma_1)=
\mathbf{i}^{t_{1}}g\phi(\gamma_1)g^{-1}=\overline{\phi(\gamma_1)}\neq\phi(\gamma_1)$, which is a contradiction.
Thus, $\rho$ and $\rho'$ are not globally conjugate.
\end{proof}

Due to $\SO(6)\cong\SU(4)/\langle-I\rangle$, $\SO(6)$ is also unacceptable. The unacceptability of $\SO(6)$ is first
shown by Matthew Weidner (\cite{Weidner}). Actually, the above example of element-conjugate but not globally conjugate
homomorphisms from $(\mathbb{Z}/4\mathbb{Z})^{2}$ to $\SU(4)/\langle-I\rangle$ is the counter-part of the example of
a pair of homomorphisms from $(\mathbb{Z}/4\mathbb{Z})^{2}$ to $\SO(6)$ constructed by Matthew Weidner in \cite{Weidner}.

\smallskip

\begin{example}\label{E:Sp3-2}
For any $m\geq 3$, let $G=\Sp(1)^{m}/\langle(-1,\dots,-1)\rangle$. Let $\Gamma=(\mathbb{Z}/4\mathbb{Z})^{2}$ with two
generators $\gamma_1,\gamma_2$. Define $\phi,\phi':\Gamma\rightarrow\Sp(1)^{m}$ by \[\phi(\gamma_1)\!=\!\phi'(\gamma_{1})
\!=\!(1,\dots,1,\mathbf{i},\mathbf{i}),\ \phi(\gamma_2)\!=\!(\mathbf{i},\dots,\mathbf{i},1,\mathbf{i}),
\ \phi'(\gamma_2)\!=\!(\mathbf{i},\dots,\mathbf{i},1,-\mathbf{i}).\] Let $\pi:\Sp(1)^{m}\longrightarrow
\Sp(1)^{m}/\langle(-1,\dots,-1)\rangle=G$ be the natural projection. Set $\rho=\pi\circ\phi$, $\rho'=\pi\circ\phi'.$
In the below, we show that $\rho$ and $\rho'$ are element-conjugate, but not globally conjugate. Thus, $G$ is
unacceptable.
\end{example}

\begin{proof}
Put $z=(-1,\dots,-1)\in Z(\Sp(1)^{m})$. Write $\gamma=\gamma_1^{a}\gamma_2^{b}$ ($a,b\in\{1,2,3,4\}$) for a general
element of $\Gamma$. Then, \[\phi(\gamma)=(\mathbf{i}^{b},\dots,\mathbf{i}^{b},\mathbf{i}^{a},\mathbf{i}^{a+b})
\textrm{ and }\phi'(\gamma)=(\mathbf{i}^{b},\dots,\mathbf{i}^{b},\mathbf{i}^{a},(-1)^{b}\mathbf{i}^{a+b}).\] When
$b$ is even, we have $\phi'(\gamma)=\phi(\gamma)$; when $b$ is odd and $a$ is even, we have $\phi'(\gamma)\sim
\phi(\gamma)$; when $b$ and $a$ are both odd, we have $\phi'(\gamma)\sim z\phi(\gamma)$. In any case we have
$\rho'(\gamma)\sim\rho(\gamma)$. Thus, $\rho$ and $\rho'$ are element-conjugate.

Suppose $\rho$ and $\rho'$ are globally conjugate. We will reach a contradiction by projecting to the last three
coordinates: $G\rightarrow\Sp(1)^{3}/\langle(-1,-1,-1)\rangle$. So, we may assume that $m=3$. There exists $g=
(g_1,g_2,g_3)\in\Sp(1)^{3}$ such that: for each $\gamma\in\Gamma$, there exists $t\in\{0,1\}$ such that
$\phi'(\gamma)=z^{t}g\phi(\gamma)g^{-1}$. Write $$\phi'(\gamma_1)=z^{t_{1}}g\phi(\gamma_1)g^{-1}\textrm{ and }
\phi'(\gamma_2)=z^{t_{2}}g\phi(\gamma_2)g^{-1}$$ where $t_1,t_2\in\{0,1\}$. Then, for each $j\in\!\{1,2,3\}$,  $g_{j}\mathbf{i}g_{j}^{-1}=\eta_{j}\mathbf{i}$ for some $\eta_{j}=\pm{1}$. From $\phi'(\gamma_1)=\phi(\gamma_1)
=(1,\mathbf{i},\mathbf{i})$, we get $t_1=0$ and $\eta_2=\eta_3=1$. From $\phi(\gamma_2)=(\mathbf{i},1,\mathbf{i})$
and $\phi'(\gamma_2)=(\mathbf{i},1,-\mathbf{i})$, we get $t_2=0$, $\eta_1=1$, $\eta_3=-1$. It is a
contradiction that $1=\eta_3=-1$. Thus, $\rho$ and $\rho'$ are not globally conjugate.
\end{proof}

\smallskip

\begin{example}\label{E:3A1-2}
Let $G=\Sp(1)^{3}/\langle(1,-1,-1),(-1,1,-1)\rangle$. Write $\eta=\frac{1+\mathbf{i}}{\sqrt{2}}$. Let $\Gamma\subset G$
be a finite subgroup generated by \[ [(\mathbf{j},\eta,\eta)],\ [(\eta,\mathbf{j},\eta)],\ [(\eta,\eta,\mathbf{j})],
\ [(\mathbf{i},\mathbf{i},\mathbf{i})].\] Write $\gamma_1=[(\mathbf{j},\eta,\eta)]$, $\gamma_{2}=[(\eta,\mathbf{j},
\eta)]$, $\gamma_3=[(\eta,\eta,\mathbf{j})]$, $\gamma_0=[(\mathbf{i},\mathbf{i},\mathbf{i})]$ and $z_{0}=[(-1,-1,-1)]$.
Put $\Gamma_{0}=\langle z_{0},\gamma_1,\gamma_2,\gamma_3\rangle$. Let $\rho:\Gamma\rightarrow G$ be the inclusion and
let $\rho':\Gamma\rightarrow G$ be defined by $\rho'|_{\Gamma_0}=\id$ and $\rho'(\gamma)=z_{0}\gamma$ ($\forall\gamma
\in\Gamma-\Gamma_{0}$). Then, $\rho$ and $\rho'$ are element-conjugate homomorphisms, but not globally conjugate. Thus,
$G$ is unacceptable.
\end{example}

\begin{proof}
Put $y_1=[(1,\mathbf{i},\mathbf{i})]$, $y_{2}=[(\mathbf{i},1,\mathbf{i})]$, $y_3=[(\mathbf{i},\mathbf{i},1)]$ and
$\Gamma_{1}=\langle z_0,y_1,y_2,y_3\rangle$. It is clear that $\Gamma_1$ is a normal subgroup of $\Gamma$. Note that
$y_{k}^{2}=1$ ($k=1,2,3$) and $y_{1}y_{2}y_{3}=z_{0}$. Thus, $\Gamma_1\cong(\mathbb{Z}/2\mathbb{Z})^{3}$.
Since $\gamma_{k}^{2}=[\gamma_{k+1},\gamma_{k+2}]=z_{0}y_{k}$ ($k=1,2,3$, here we let $\gamma_{4}=\gamma_1$
and $\gamma_{5}=\gamma_{2}$), we see that: $\Gamma_{0}/\Gamma_{1}\cong(\mathbb{Z}/2\mathbb{Z})^{3}$. Since
$\gamma_{0}^{2}=z_{0}$ and $\gamma_{0}\gamma_{k}\gamma_{0}^{-1}=z_{0}\gamma_{k}$ ($k=1,2,3$), we get
$\Gamma/\Gamma_{0}\cong\mathbb{Z}/2\mathbb{Z}$. This explains that $\rho'$ is a well-defined homomorphism.

Any element $\gamma\in\Gamma-\Gamma_{0}$ is of the form \[\gamma=\gamma_{0}\gamma_{1}^{t_{1}}\gamma_{2}^{t_{2}}
\gamma_{3}^{t_{3}}y_{1}^{s_{1}}y_{2}^{s_{2}}y_{3}^{s_{3}}\] with $t_{1},t_{2},t_{3},s_{1},s_{2},s_{3}\in\{0,1\}$.
Write $\gamma=[(x_1,x_2,x_3)]$ ($x_{k}\in\Sp(1)$). If some $t_{k}=1$ ($k=1,2,3$), then $x_{k}\sim\mathbf{j}\sim
\mathbf{i}$. If all $t_{1},t_{2},t_{3}=0$, then at least one $x_{k}\sim\mathbf{i}$ since each $x_{k}\in\{1,-1,\mathbf{i},
-\mathbf{i}\}$ and $x_{1}x_{2}x_{3}=\pm{\mathbf{i}}$. Without loss of generality we assume that $x_{1}\sim\mathbf{i}$.
Then, \[z_{0}\gamma=[(-x_1,x_2,x_3)]\sim[(x_1,x_2,x_3)]=\gamma.\] Thus, $\rho$ and $\rho'$ are element-conjugate.

Suppose that $\rho$ and $\rho'$ are globally conjugate, i.e., there exists $g\in G$ such that $g\rho(\gamma)g^{-1}=
\rho'(\gamma)$ ($\forall\gamma\in\Gamma$). Let \[\pi:G\rightarrow\Sp(1)^{3}/\langle(-1,1,1),(1,-1,1),(1,1,-1)\rangle
\cong\SO(3)^{3}\] be the natural projection. Then, $\pi(\rho(\gamma))=\pi(\rho'(\gamma))$ ($\forall\gamma\in\Gamma$).
Thus, $\pi(g)$ commutes with $\pi(\Gamma)$. This implies that: $g\in\langle[(\mathbf{i},1,1)],[(1,\mathbf{i},1)],
[(1,1,\mathbf{i})]\rangle$. Then, \[\gamma_{0}=g\gamma_{0}g^{-1}=g\rho(\gamma_{0})g^{-1}=\rho'(\gamma_0)=
z_{0}\gamma_{0}\neq\gamma_{0},\] which is a contradiction. Thus, $\rho$ and $\rho'$ are not globally conjugate.
\end{proof}

We deduce some immediate consequences of Example \ref{E:Sp3-2}.

\begin{example}[Chenevier-Gan, \cite{Chenevier-Gan}]\label{E:unacceptable-ChenevierGan}
The group $\Spin(7)$ is unacceptable.
\end{example}

\begin{proof}
Take $x=e_1e_2e_3e_4\in\Spin(7)$. Then, $$\Spin(7)^{x}=\Spin(4)\cdot\Spin(3)\cong\Sp(1)^{3}/\langle(-1,-1,-1)
\rangle.$$ By Example \ref{E:Sp3-2}, $\Sp(1)^{3}/\langle(-1,-1,-1)\rangle$ is unacceptable. Then by Lemma
\ref{L:centralizer2}, so is $\Spin(7)$.
\end{proof}

The unacceptability of $\Spin(7)$ is first shown by Ga\"etan Chenevier and Wee Teck Gan (\cite{Chenevier-Gan}). In
\cite{Larsen2} Michael Larsen made a mistake while proving that ``$\Spin(7)$ is acceptable". The author quoted this wrong
result in the first version of this paper. After posting this version on arXiv, Wee Teck Gan kindly showed the author a
letter of him and Chenevier to Larsen which contains a concrete pair of element-conjugate but not globally conjugate
homomorphisms from $(\mathbb{Z}/4\mathbb{Z})^{2}$ to $\Spin(7)$. More than letting the author realize this mistake,
Chenevier-Gan 's construction also lets him realize that $\Sp(1)^{m}/\langle(-1,\dots,-1)\rangle$ might be unacceptable
and inspires the construction in Example \ref{E:Sp3-2}.

The following question is interesting in that its resolution might help identify Langlands parameters of automorphic
representations of $\GSp_{6}$ of Artin type.

\begin{question}\label{Q:Spin7}
Can one classify all pairs of element-conjugate but not globally conjugate homomorphisms from a finite group to $\Spin(7)$?
\end{question}

The unacceptability of $\Spin(8)$ was first shown by Larsen using homomorphisms from a finite simple group
$\SL(3,\mathbb{Z}/2\mathbb{Z})$ (\cite[Prop. 2.5]{Larsen2}). In the following Example \ref{E:Spin8} we give
a much easier proof.

\begin{example}\label{E:Spin8}
The group $\Spin(8)$ is unacceptable.
\end{example}

\begin{proof}
Take $x=e_1e_2e_3e_4\in\Spin(8)$. Then, $$\Spin(8)^{x}=\Spin(4)\cdot\Spin(4)\cong\Sp(1)^{4}/\langle(-1,-1,-1,-1)\rangle.$$
By Example \ref{E:Sp3-2}, $\Sp(1)^{4}/\langle(-1,-1,-1,-1)\rangle$ is unacceptable. Then by Lemma \ref{L:centralizer2},
so is $\Spin(8)$.
\end{proof}

The following Example \ref{E:Ap-1} is well-known.

\begin{example}\label{E:Ap-1}
For any odd prime $p$, the group $\SU(p)/\langle\omega_{p}I\rangle$ is unacceptable, where $\omega_{p}=e^{\frac{2\pi i}{p}}$.
\end{example}

\begin{proof}
Put $$A_{p}=\diag\{1,\omega_{p},\dots,\omega_{p}^{p-1}\}$$ and \[B_{p}=\left(\begin{array}{ccccc}0&1&0&\ldots&0\\0&0&1&\ldots
&0\\0&0&0&\ldots&0\\\vdots&\vdots&\vdots&\ddots&\vdots\\1&0&0&\ldots&0\\\end{array}\right).\] Then, $A_{p},B_{p}\in\SU(p)$,
$(A_{p})^{p}=(B_{p})^{p}=I$ and $A_{p}B_{p}A_{p}^{-1}B_{p}^{-1}=\omega_{p}^{-1}I$. Moreover, $A_{p}^{i}B_{p}^{j}\sim A_{p}$ for
any pair $(i,j)\in\{0,1,\dots,p-1\}^{2}$ except when $i=j=0$.

Put $\Gamma=(\mathbb{Z}/p\mathbb{Z})^{2}$ with two generators $\gamma_{1},\gamma_{2}$. Define homomorphism $\rho,\rho':
\Gamma\rightarrow\SU(p)/\langle\omega_{p}I\rangle$ by $\rho(\gamma_1)=\rho'(\gamma_1)=[A_{p}]$, $\rho(\gamma_2)=[B_{p}]$
and $\rho'(\gamma_2)=[B_{p}^{-1}]$. By statements in the first paragraph, $\rho,\rho'$ are well-defined homomorphisms and
they are element-conjugate.

Suppose that $\rho,\rho'$ are globally conjugate. Then there exists $g\in\SU(p)$ such that $gA_{p}g^{-1}=\omega_{p}^{t_1}A_{p}$
and $gB_{p}g^{-1}=\omega_{p}^{t_2}B_{p}^{-1}$ for some $t_{1},t_{2}\in\mathbb{Z}$. Then, \[\omega_{p}^{-1}I=[A_{p},B_{p}]=
[gA_{p}g^{-1},gB_{p}g^{-1}]=[\omega_{p}^{t_1}A_{p},\omega_{p}^{t_2}B_{p}^{-1}]=\omega_{p}I.\] Thus, $\omega_{p}^{2}=1$,
which contradicts to $p$ being an odd prime.
\end{proof}

\begin{example}\label{E:Ap-2}
Assume that $k\geq 1$ and $s_{1},\dots,s_{k}\in\{1,2,\dots,p-1\}$. Then, $G=\SU(p)^{k}/\langle(\omega_{p}^{s_{1}}I,\cdots,
\omega_{p}^{s_{k}}I)\rangle$ is unacceptable.
\end{example}

\begin{proof}
Put $\Gamma=(\mathbb{Z}/p\mathbb{Z})^{2}$ with two generators $\gamma_{1},\gamma_{2}$. Define homomorphism $\rho,\rho':
\Gamma\rightarrow G$ by \[\rho(\gamma_1)=\rho'(\gamma_1)=[\underbrace{(A_{p},\dots,A_{p})}_{k}]\] and  $$\rho(\gamma_2)=
[(B_{p}^{s_{1}},\dots,B_{p}^{s_{k}})],\  \rho'(\gamma_2)=[(B_{p}^{-s_1},\dots,B_{p}^{-s_{m}})].$$ By similar reason as
in the proof of Example \ref{E:Ap-1}, $\rho$ and $\rho'$ are element-conjugate. By projecting to the first component
$\SU(p)/\langle\omega_{p}^{s_{1}}I\rangle$, it is shown in the proof of Example \ref{E:Ap-1} that $\rho$ and $\rho'$ are
not globally conjugate.
\end{proof}

\subsection{Proof of the main theorem}

Unacceptability of most groups in the following Prop. \ref{P:unacceptable1} is already shown by Larsen (\cite{Larsen1},
\cite{Larsen2}).

\begin{proposition}\label{P:unacceptable1}
Any group $G$ in the following list is unacceptable: \begin{itemize}
\item[(i)] $\SU(n)/\mu_{m}$ ($m|n$, $n\geq 3$, $m\geq 2$);
\item[(ii)] $\PSp(n)$ ($n\geq 3$);
\item[(iii)] $\SO(2n)$ ($n\geq 3$);
\item[(iv)] $\PSO(2n)$ ($n\geq 3$);
\item[(v)] $\Spin(n)$ ($n\geq 7$);
\item[(vi)] $\HSpin(4n)$ ($n\geq 2$);
\item[(vii)] $\F_4$, $\E_6$, $\E_7$, $\E_8$, $\E_{6}^{ad}$, $\E_{7}^{ad}$.
\end{itemize}
\end{proposition}

\begin{proof}
In item (i), $(n,m)$ falls into one of the following cases: (1)$m$ has an odd prime factor $p$;  (2)$n\geq 6$ and $2|m$;
(3)$n=4$ and $m\in\{2,4\}$. (1)when $m$ has an odd prime factor $p$, put $$A=\{[\diag\{\lambda_{1}I_{p},\dots,
\lambda_{n/p}I_{p}\}]\in G:|\lambda_{i}|=1\}.$$ Then, $Z_{G}(A)=A\cdot Z_{G}(A)_{\der}$ and $Z_{G}(A)_{\der}\cong
\SU(p)^{k}/\langle(\omega_{p}I,\cdots,\omega_{p}I)\rangle$. By Example \ref{E:Ap-2}, $Z_{G}(A)_{\der}$ is unacceptable.
By Lemmas \ref{L:centralizer2} and \ref{L:product}, so is $G$. (2)when $n\geq 6$ and $2|m$, put $$A=\{[\diag\{\lambda_{1}
I_{2},\dots,\lambda_{n/2}I_{2}\}]\in G:|\lambda_{i}|=1\}.$$ Then, $Z_{G}(A)=A\cdot Z_{G}(A)_{\der}$ and $Z_{G}(A)_{\der}
\cong\Sp(1)^{n/2}/\langle(-1,\dots,-1)\rangle$ with $\frac{n}{2}\geq 3$. By Example \ref{E:Sp3-2}, $\Sp(1)^{n/2}/\langle
(-1,\dots,-1)\rangle$ is unacceptable. By Lemmas \ref{L:centralizer2} and \ref{L:product}, so is $G$. (3)when $n=4$
and $m\in\{2,4\}$, by Example \ref{E:SU4} $G$ is unacceptable.

In item (ii), take $A\!=\!\{[\diag\{t_{1},\dots,t_{n}\}]:\!t_{j}\!=\!\pm{1}\}.$ Then, \[Z_{G}(A)\cong
\Sp(1)^{n}/\langle(-1,\dots,-1)\rangle.\] By Example \ref{E:Sp3-2}, $\Sp(1)^{n}/\langle(-1,\dots,-1)\rangle$ is
unacceptable. By Lemmas \ref{L:centralizer2} and \ref{L:product}, so is $G$.

In item (iii), take a maximal torus $A$ of $\SO(2n-6)\subset\SO(2n)=G$. Then, $Z_{\SO(2n)}(A)=A\times\SO(6)$.
By Example \ref{E:SU4}, $\SO(6)$ is unacceptable. By Lemmas \ref{L:centralizer2} and \ref{L:product}, so
is $G$.

In item (iv), when $n\geq 4$, take a maximal torus $A$ of $\SO(2n-6)\subset\PSO(2n)=G$. Then, $Z_{\PSO(2n)}(A)
=A\cdot\SO(6)$. By Example \ref{E:SU4}, $\SO(6)$ is un acceptable. By Lemmas \ref{L:centralizer2} and
\ref{L:product}, so is $G$. When $n=3$, $\PSO(6)\cong\PSU(4)$ is unacceptable by Example \ref{E:SU4}.

In item (v), take a maximal torus $A$ of $\Spin(n-7)\subset\Spin(n)=G$. Then, $Z_{\Spin(n)}(A)=
A\cdot Z_{\Spin(n)}(A)_{\der}$ and $Z_{\Spin(n)}(A)_{\der}\cong\Spin(7)$ (when $n$ is odd) or $\Spin(8)$ (when $n$
is even). By Example \ref{E:unacceptable-ChenevierGan} and Example \ref{E:Spin8}, both $\Spin(7)$ and $\Spin(8)$
are unacceptable. By Lemmas \ref{L:centralizer2} and \ref{L:product}, so is $G$.

In item (vi), when $n\geq 3$, take a maximal torus $A$ of $\Spin(4n-8)\subset\HSpin(4n)=G$. Then, $Z_{G}(A)=
A\cdot Z_{G}(A)_{\der}$ and $Z_{G}(A)_{\der}\cong\Spin(8)$ (due to the central element $c$ of $\Spin(4n)$ is not
contained in the subgroup $\Spin(8)$ of $\Spin(4n)$). By Example \ref{E:Spin8}, $\Spin(8)$ is unacceptable. By Lemmas
\ref{L:centralizer2} and \ref{L:product}, so is $G$. When $n=2$, $\HSpin(8)\cong\SO(8)$ is unacceptable.

In item (vii), each of $G=\E_6$, $\E_7$, $\E_8$, $\E_6^{ad}$, $\E_7^{\ad}$ possesses a unique Levi subgroup of type
$\bf{D}_{4}$ up to conjugacy. Thus, there exists a torus $A\subset G$ such that $Z_{G}(A)=A\cdot Z_{G}(A)_{\der}$
and $Z_{G}(A)_{\der}\cong\Spin(8)$, $\SO(8)$ or $\PSO(8)$ (actually more careful analysis shows that the derived
subgroup of this Levi subgroup is isomorphic to $\Spin(8)$). When $G=\F_4$, there exists a Klein four subgroup $A$
such that $Z_{G}(A)\cong\Spin(8)$ (\cite[Table 6]{Huang-Yu}). As each of $\Spin(8),\SO(8),\PSO(8)$ is unacceptable,
by Lemmas \ref{L:centralizer2} and \ref{L:product} so is $G$.
\end{proof}

We call a group in-decomposable if it is not the direct product of two nontrivial groups. Let $G$ be an
in-decomposable and non-simple connected compact semisimple Lie group. Then, $G$ is of the following form: \begin{equation}\label{Eq-standard}G=(G_1\times\cdots\times G_{s})/Z\end{equation} where $s\geq 2$, each $G_{i}$
($1\leq i\leq s$) is a connected compact simple Lie group, $Z\subset Z(G_1)\times\cdots\times Z(G_{s})$,
$Z\cap Z(G_{i})=1$ ($1\leq i\leq s$), and the image of projection of $Z$ to each $Z(G_{i})$ is non-trivial.

\begin{proposition}\label{P:unacceptable3}
Let $G$ be an in-decomposable and non-simple connected compact semisimple Lie group of the form in
\eqref{Eq-standard}. If $G$ is acceptable, then each $G_{i}\cong\Sp(1)$.
\end{proposition}

\begin{proof}
By Lemma \ref{L:product}, each $G_{i}$ is also acceptable. Since $G$ is in-decomposable, then each $Z(G_{i})\neq 1$.
Then by Prop. \ref{P:unacceptable1}, each $G_{i}\cong\SU(n)$ ($n\geq 3$) or $\Sp(n)$ ($n\geq 1$). Suppose that
not all $G_{i}$ ($1\leq i\leq s$) are isomorphic to $\Sp(1)$.

First, we show that $Z$ is a 2-group. Suppose it is not this case. Then, $Z$ contains an element with order an
odd prime $p$. Choose an order $p$ element $z=(z_{1},\dots,z_{s})\in Z$ with least number of nontrivial
components (among order $p$ elements in $Z$). Without loss of generality we assume that $z_{i}\neq 1$ if and only
$1\leq i\leq t$ where $1\leq t\leq s$. Then, the $p$-torsion subgroup of $Z\cap Z(G_{1}\times\cdots\times G_{t})$
is generated by $z$. Put $G_{i}=\SU(pm_{i})$ where $m_{i}\geq 1$ ($1\leq i\leq t$). Write $k:=m_{1}+\dots+m_{t}
\geq 1$. One can find a torus $A\subset G$ such that $Z_{G}(A)=A\cdot Z_{G}(A)_{\der}$ and $Z_{G}(A)_{\der}\cong
\SU(p)^{k}/\langle(\omega_{p}^{s_{1}}I,\cdots,\omega_{p}^{s_{k}}I)\rangle$, where $s_{1},\dots,s_{k}\in\{1,2,\dots,
p-1\}$. By Example \ref{E:Ap-2}, $Z_{G}(A)_{\der}$ is unacceptable. By Lemmas \ref{L:centralizer2} and
\ref{L:product}, so is $G$.

Second, we show that there exists an order 2 element $z=(z_{1},\dots,z_{s})\in Z$ and some $i$ such that
$z_{i}\neq 1$ and $G_{i}\not\cong\Sp(1)$. If $Z$ is not an elementary abelian 2-group, then it contains an element
$z'$ of order 4. Then, $z:=z'^{2}$ suffices the goal. If $Z$ is an elementary abelian 2-group. Choose $i$ such
that $G_{i}\not\cong\Sp(1)$. Since $G$ is in-decomposable, there exists $1\neq z=(z_{1},\dots,z_{s})\in Z$ such
that $z_{i}\neq 1$. Then, this $z$ suffices the goal.

Third, we deduce a contradiction. Without loss of generality we assume that $G_{1}\not\cong\Sp(1)$ and there
exists an order 2 element $z'=(z'_{1},\dots,z'_{s})\in Z$ such that $z'_{1}\neq 1$. Among such elements $z'$
choose an element $z=(z_1,\dots,z_{s})$ such that the number of non-trivial elements in $z_{2},\dots,z_{s}$
is the least. Without loss of generality we assume that $z_{i}\neq 1$ if and only $1\leq i\leq t$ where
$1\leq t\leq s$. Then, the 2-torsion subgroup of $Z\cap Z(G_{1}\times\cdots\times G_{t})$ is generated by $z$.
Put $G_{i}\cong\SU(2m_{i})$ or $\Sp(m_{i})$ ($1\leq i\leq t$) where $m_{i}\geq 1$ and $m_{1}\geq 2$. Since
$Z(G_{i})\cap Z=1$ for each $i$, we have $t\geq 2$. Then, $k:=m_{1}+\dots+m_{t}\geq 3$. One can find a closed
abelian subgroup $A\subset G$ such that $Z_{G}(A)=A\cdot Z_{G}(A)_{\der}$ and $Z_{G}(A)_{\der}\cong
\Sp(1)^{k}/\langle(-1,\dots,-1)\rangle$. By Example \ref{E:Sp3-2}, $\Sp(1)^{k}/\langle(-1,\dots,-1)\rangle$
is unacceptable. By Lemmas \ref{L:centralizer2} and \ref{L:product}, so is $G$. This contradicts to the
assumption of the proposition and finishes the proof.
\end{proof}

The following combinatorial Lemma \ref{L:F2} has nothing to do with acceptability. Its formulation is suggested by 
a referee. Let's view the set of subsets of $\{1,\dots,n\}$ as the $\mathbb{Z}/2\mathbb{Z}$-vector space
$V=(\mathbb{Z}/2\mathbb{Z})^{n}$ and denote by $e_{I}\in V$ for an element corresponding to a subset $I$ of
$\{1,\dots,n\}$. Then, $e_{I}+e_{J}=e_{(I-I\cap J)\sqcup(J-I\cap J)}$ for any two subsets $I$ and $J$ of
$\{1,\dots,n\}$. For a subset $I$ of $\{1,\dots,n\}$, put $V_{I}=\{e_{J}:J\subset I\}$. Then, $V_{I}$ is a
sub-vector space of $V$ with dimension $|I|$. For any sub-vector space $W$ of $V$, put $W_{I}=W\cap V_{I}$.

\begin{lemma}\label{L:F2}
Let $n\geq 2$ and $0\neq W$ be a sub-vector space of $V=(\mathbb{Z}/2\mathbb{Z})^{n}$. Assume that: \begin{enumerate}
\item[(i)]$W$ is in-decomposable: for any decomposition $\{1,\dots,n\}=I\sqcup J$ with $0<|I|<n$, $W\neq W_{I}\oplus W_{J}$.
\item[(ii)]for any $I$ such that $W_{I}=\langle e_{I}\rangle$, we have $|I|\leq 2$.
\item[(iii)]for any $I$ with $|I|=3$, $W_{I}$ is not the subspace of sum 0 elements in $V_{I}$.
\end{enumerate}
Then we have $n=2$.
\end{lemma}

\begin{proof}
As $W$ is in-decomposable and $V=V_{I}\oplus V_{J}$ whenever $\{1,\dots,n\}=I\sqcup J$, it follow that: for any $I$ with
$0<|I|<n$, \begin{enumerate}
\item[(1)]$W$ does not contain $V_{I}$.
\item[(2)]$W$ is not contained in $V_{I}$.
\item[(3)]when $|I|=1$, $e_{I}\not\in W$.
\end{enumerate}

Let $W'$ be the subspace of $W$ generated by $e_{I}\in W$ with $|I|=2$. We show that $W'=W$. Suppose $W'\neq W$. Choose
an element $e_{I}\in W-W'$ with $|I|$ minimal. By condition (3) in the first paragraph, $|I|>1$. By the definition of
$W'$, $|I|\neq 2$. Thus, $|I|\geq 3$. It is clear that $W_{I}=\langle e_{I}\rangle$. Then, this is in contradiction with
the assumption (ii).

Assume now that there exists $e_{I},e_{J}\in W$ ($I\neq J$) with $|I|=|J|=2$ and $I\cap J\neq\emptyset$. Put $K=I\cup J$.
Then, $W_{K}=\langle e_{I},e_{J}\rangle$ or $V_{K}$, which contradicts to the assumption (iii) and the condition (1)
respectively. As a consequence, those $I$ with $|I|=2$ and $e_{I}\in W$ form a partition of $\{1,2,\dots,n\}$. Indeed,
their union $U$ is $\{1,2,\dots,n\}$ by the condition (2) and $W=W'\subset V_{U}$. Then, $n$ is even and for such an
$I$, we have $W=W_{I}\oplus W_{J}$, where $J=\{1,2,\dots,n\}-I$. Thus, $|J|=0$ by the assumption (i). Hence, $n=2$.
\end{proof}

\begin{lemma}\label{L:Sp1}
If $G$ admits $\Sp(1)^{n}$ as a universal covering for some $n\geq 1$ and is acceptable, then it is isomorphic to a
direct product of $\Sp(1)$, $\Sp(1)/\langle-1\rangle$, $\Sp(1)^{2}/\langle(-1,-1)\rangle$.
\end{lemma}

\begin{proof}
We may assume that $G$ is in-decomposable. When $G$ is simple, then $G\cong\Sp(1)$ or $\Sp(1)/\langle-1\rangle$.
When $G$ is non-simple, it suffices to show that: if a group $G=\Sp(1)^{n}/Z$ ($n\geq 2$, $Z\subset Z(\Sp(1)^{n})$)
is in-decomposable, non-simple and acceptable, then $n=2$.

Identify $Z(\Sp(1)^{n})$ with the set of subsets of $\{1,\dots,n\}$, as well as with the
$\mathbb{Z}/2\mathbb{Z}$-vector space $V=(\mathbb{Z}/2\mathbb{Z})^{n}$. Let $W$ be the sub-vector space corresponding
to $Z$. Since $G$ is in-decomposable, then $W$ is in-decomposable, which is the assumption (i) in Lemma \ref{L:F2}.
By Example \ref{E:Sp3-2}, we get the assumption (ii) in Lemma \ref{L:F2}. By Example \ref{E:3A1-2}, we get the
assumption (iii) in Lemma \ref{L:F2}. By Lemma \ref{L:F2}, we get $n=2$.
\end{proof}

\smallskip

\begin{proof}[Proof of Theorem \ref{T1}]
The sufficiency follows form Prop. \ref{P:acceptable1} and Lemma \ref{L:Larsen}. For the necessarity, Prop.
\ref{P:unacceptable3} and Lemma \ref{L:Larsen} reduces it to the case that $G$ is a simple group or admits
$\Sp(1)^{n}$ as a universal covering. Then, Prop. \ref{P:unacceptable1} treats simple groups and Lemma \ref{L:Sp1}
treats quotients of $\Sp(1)^{n}$.
\end{proof}

\section{Non-connected compact Lie groups}\label{S:nonC}

\begin{proposition}\label{P:unacceptable2}
Any group $G$ in the following list is unacceptable: \begin{itemize}
\item[(i)] $\SU(2n)\rtimes\langle\tau\rangle$ ($n\geq 3$, $\tau^{2}=1$, $\Ad(\tau)X=\overline{X}$);
\item[(ii)] $\Pin(n)$ ($n\geq 7$),
\item[(iii)] $\PO(4n)$ ($n\geq 2$);
\item[(iv)] $\Spin(8)\rtimes\langle\tau\rangle$ ($\tau^3=1$, $\Spin(8)^{\tau}=\G_2$);
\item[(v)] $\PSO(8)\rtimes\langle\tau\rangle$ ($\tau^3=1$, $\PSO(8)^{\tau}=\G_2$);
\item[(vi)] $\E_{6}\rtimes\langle\tau\rangle$ ($\tau^2=1$, $\E_{6}^{\tau}=\F_4$);
\item[(vii)] $\E_{6}^{ad}\rtimes\langle\tau\rangle$ ($\tau^2=1$, $(\E_{6}^{ad})^{\tau}=\F_4$).
\end{itemize}
\end{proposition}

\begin{proof}
In item (i), we have $Z_{G}(\tau)=\SO(2n)\times\langle\tau\rangle.$ As $\SO(2n)$ ($n\geq 3$) is unacceptable, so
is $G$ by Lemmas \ref{L:centralizer2} and \ref{L:product}.

In item (ii), we have $Z_{\Pin(n)}(e_1)=\Spin(n-1)\cdot\langle e_1\rangle.$ When $n\geq 8$, $\Spin(n-1)$ is
unacceptable. Then, so is $G$ by Lemmas \ref{L:centralizer2} and \ref{L:product}. When $n=7$, $\Pin(7)=
\Spin(7)\cdot Z(\Spin(7))$. As $\Spin(7)$ is unacceptable, so is $\Pin(7)$ by Lemma \ref{L:product}.

In item (iii), choose $$A=\{[\left(\begin{array}{cc}aI_{2n}&bI_{2n}\\-bI_{2n}&aI_{2n}\\\end{array}\right)]:a,b\in
\mathbb{R},a^2+b^2=1\}.$$ Then, $Z_{G}(A)=A\cdot Z_{G}(A)_{\der}$ and $Z_{G}(A)_{\der}=\SU(2n)/\langle-I\rangle$.
By Prop. \ref{P:unacceptable1}(i), $\SU(2n)/\langle-I\rangle$ is unacceptable. Then by Lemma \ref{L:product},
so is $G$.

In items (iv)-(v), there exists an order 3 element $x\in G^{0}\tau$ such that $Z_{G}(x)=\PSU(3)\times
\langle x\rangle.$ As $\PSU(3)$ is unacceptable, so is $G$ by Lemmas \ref{L:centralizer2} and \ref{L:product}.

In items (vi)-(vii), $Z_{G}(\tau)=\F_{4}\times\langle\tau\rangle.$ As $\F_{4}$ is unacceptable, so is $G$ by
Lemmas \ref{L:centralizer2} and \ref{L:product}.
\end{proof}

In \cite{Larsen1} it is shown that the group $\SU(3)\!\rtimes\!\langle\tau\rangle$ where $\tau^2=1$ and $\Ad(\tau)(X)
=\overline{X}\ (\forall X\in\SU(3))$ is acceptable. Since $\Pin(5)=Z(\Pin(5))\cdot\Pin(5)_{\der}$ and $\Pin(5)_{\der}
\cong\Sp(2)$ is strongly acceptable, so is $\Pin(5)$ by Lemma \ref{L:Larsen}. The following Question \ref{Q:acceptable2}
asks the acceptability of some non-connected compact Lie groups.

\begin{question}\label{Q:acceptable2}
Are the following groups acceptable or unacceptable: \begin{itemize}
\item[(i)] $\Pin(2n)$ ($n=2$ or 3);
\item[(ii)]$\PO(4)$;
\item[(iii)] $\PO(4n+2)$ ($n\geq 1$).
\item[(iv)]$\U(n)\!\rtimes\!\langle\tau\rangle$ ($n\geq 2$, $\tau^{2}=1$, $\Ad(\tau)X=\overline{X}$);
\item[(v)]$\SU(\!2n\!+\!1)\!\rtimes\!\langle\tau\rangle$ ($n\geq 2$, $\tau^{2}=1$, $\Ad(\tau)X=\overline{X}$).
\end{itemize}
\end{question}


\section{$\SO_{4}(\mathbb{C})$-pseudocharacters and invariant functions}\label{S:pseudo}

Let $G$ be a connected complex linear reductive group. Write $\mathbb{C}[G^{n}]^{G}$ for the ring of regular
functions on $G^{n}=\underbrace{G\times\cdots\times G}_{n}$ invariant with respect to the diagonal conjugation
action of $G$ on $G^{n}$. A word $w$ of length $m$ and taking values in $\{1,2,\dots,n\}$ is a map
$w:\{1,\dots,m\}\rightarrow\{1,2,\dots,n\}$. Write $w_{i}=w(i)$ ($1\leq i\leq m$). For such a word and an
invariant function $f\in\mathbb{C}[G]^{G}$, define $\tilde{f}^{w}\in\mathbb{C}[G^{n}]^{G}$ by
\[\tilde{f}^{w}(g_{1},\dots,g_{n})=f(g_{w_{1}}\dots g_{w_{m}}),\] which is called a 1-argument invariant.
An invariant $f\in\mathbb{C}[G^{n}]^{G}$ is said to be generated by 1-argument invariants if it is a finite
linear combination of invariants of the form $\tilde{f}_{1}^{w_{1}}\cdots\tilde{f}_{k}^{w_{k}}$ where each $w_{j}$
($1\leq j\leq k$) is a word taking values in $\{1,2,\dots,n\}$, and $f_{1},\dots,f_{k}\in\mathbb{C}[G]^{G}$.



Let $\Gamma$ be a group. A complex coefficient $G$-pseudocharacter of $\Gamma$ is a collection of algebra homomorphisms $\Theta_{n}:\mathbb{C}[G^{n}]^{G}\rightarrow\Map(\Gamma^{n},\mathbb{C})$ ($n\geq 1$) satisfying the following two conditions
(cf. \cite[Definition 4.1]{Bockle-Harris-Khare-Thorne}): \begin{enumerate}
\item[(i)]for each $n,m\geq 1$, each map $\xi:\{1,\dots,m\}\rightarrow\{1,\dots,n\}$, each $f\in\mathbb{C}[G^{m}]^{G}$ and
any $\gamma_{1},\dots,\gamma_{n}\in\Gamma$, we have \[\Theta_{n}(f^{\xi})(\gamma_{1},\dots,\gamma_{n})=\Theta_{m}(f)
(\gamma_{\xi(1)},\dots,\gamma_{\xi(m)})\] where $f^{\xi}(g_{1},\dots,g_{n})=f(g_{\xi(1)},\dots,g_{\xi(m)})$.
\item[(ii)]for each $n\geq 1$, each $f\in\mathbb{C}[G^{n}]^{G}$ and any $\gamma_{1},\dots,\gamma_{n+1}\in\Gamma$,
\[\Theta_{n+1}(\hat{f})(\gamma_{1},\dots,\gamma_{n+1})=\Theta_{n}(f)(\gamma_{1},\dots,\gamma_{n-1},\gamma_{n}\gamma_{n+1}),
\] where $\hat{f}(g_{1},\dots,g_{n+1})=f(g_{1},\dots,g_{n-1},g_{n}g_{n+1})$.
\end{enumerate}

Suppose given a homomorphism $\rho:\Gamma\rightarrow G$. Then, the collection of maps $\Theta_{n}(f)=f(\rho(\gamma_1),\dots,
\rho(\gamma_{n}))$ is a $G$-pseudocharacter. Let $\tr\rho=(\Theta_{n})_{n\geq 1}$ denote the associated $G$-pseudocharacter
of a homomorphism $\rho:\Gamma\rightarrow G$.

\begin{theorem}[\cite{Lafforgue2}]\label{T:Lafforgue}
The assignment $\rho\mapsto\Theta=\tr\rho$ induces a bijection between the following two sets: \begin{enumerate}
\item[(i)]The set of $G$-conjugacy classes of $G$-completely reducible homomorphisms $\rho:\Gamma\rightarrow G$.
\item[(ii)]The set of $G$-pseudocharacters of $\Gamma$.
\end{enumerate}
\end{theorem}

Recall that, $\rho:\Gamma\rightarrow G$ is said to $G$-completely reducible if whenever $\Im\rho$ is contained in a
parabolic subgroup $P$ of $G$, then it is contained in a Levi subgroup of $P$. Using pseudocharacters (and other tools),
in \cite{Lafforgue2} Vincent Lafforgue showed one direction of Langlands correspondence over function fields: he
associates with each everywhere unramified, cuspidal automorphic representation its semisimplified Langlands parameter.
A consequence of Lafforgue's results is: for a connected complex linear reductive group $G$, if invariants in
$\mathbb{C}[G^{n}]^{G}$ are generated by 1-argument invariants for each $n\geq 1$, then $G$ is acceptable
(\cite{Lafforgue},\cite{Lafforgue2}). In \cite{Procesi2} Procesi showed that classical groups like $\GL_{n}(\mathbb{C})$,
$\Sp_{n}(\mathbb{C})$, $\O_{n}(\mathbb{C})$ satisfy this property. Procesi's result is used by Taylor to study
$\GL_{n}$-pseudocharacters. During a personal communication Xinwen Zhu asks the author if the converse statement
also holds (\cite{Zhu}). That is, if $G$ is acceptable, is $\mathbb{C}[G^{n}]^{G}$ generated by 1-argument invariants
for each $n\geq 1$? In the below we disprove this by showing that: for $G=\SO_{4}(\mathbb{C})$, $\mathbb{C}[G^{2}]^{G}$
is not generated by 1-argument invariants. Note that $\SO_{4}(\mathbb{C})$ is acceptable as its maximal compact subgroup
$\SO(4)$ is. Note that $\SO_{4}(\mathbb{C})\cong(\SL_{2}(\mathbb{C})\times\SL_{2}(\mathbb{C}))/\langle(-I,-I)\rangle$.
For simplicity, in the below let $\SL_{2}$ denote $\SL_{2}(\mathbb{C})$, and let $\SO_{4}$ denote $\SO_{4}(\mathbb{C})$.

\smallskip

The following Lemma \ref{L:SO4-1} is well-known.

\begin{lemma}\label{L:SO4-1}
Let $\SL_{2}$ act on $\SL_2\times\SL_{2}$ by diagonal conjugation. Write $(X_{1},X_{2})$ ($X_{1},X_{2}\in\SL_{2}$)
for a general element in $\SL_2\times\SL_{2}$. Then $\mathbb{C}[\SL_2\times\SL_{2}]^{\SL_2}$ is a polynomial ring with
generators $\tr X_{1},\tr X_{2},\tr X_{1}X_{2}.$
\end{lemma}

Write $G=\SL_2\times\SL_2$, $H=\SO_4$. Then, the map $(X,Y)\rightarrow Z=X\otimes Y$ gives a two-fold
covering $\pi:G\rightarrow H$.

\begin{lemma}\label{L:SO4-2}
Viewed as a subring of $\mathbb{C}[G]^{G}$, the ring $\mathbb{C}[H]^{H}$ is generated by $(\tr X)^{2}$, $(\tr Y)^{2}$,
$\tr X\tr Y.$
\end{lemma}

\begin{proof}
Let $\SL_{2}\times\SL_{2}$ act on itself by conjugation. Then \[\mathbb{C}[\SL_{2}\times\SL_{2}]^{\SL_{2}\times\SL_{2}}=
\mathbb{C}[\tr X,\tr Y].\] Write $z=(-I,-I)\in G$. Then, $H=G/\langle z\rangle$. Let $z$ act on $G$ by left translation.
Then the induced action of $z$ on $\mathbb{C}[G]^{G}$ is given by $$(z\tr X,\!z\tr Y)\!=
\!(-\tr X,\!-\tr Y).$$ Thus, \[\mathbb{C}[H]^{H}=(\mathbb{C}[G]^{G})^{z}=\mathbb{C}[(\tr X)^{2},(\tr Y)^{2},\tr X\tr Y].\]
\end{proof}

By elementary calculation, one can verify that Lemma \ref{L:SO4-2} is consistent with the invariant ring of $\SO_{4}$
given in \cite{Aslaksen-Tan-Zhu}.


Write $(X_1,Y_1,X_2,Y_2)$ for a general element of $G\times G=\SL_{2}^{4}$. By Lemma \ref{L:SO4-1}, the ring
\[R:=\mathbb{C}[G\times G]^{G}\] is a polynomial ring with generators $$\tr X_1,\tr X_2,\tr X_1X_2,\tr Y_1,\tr Y_2,
\tr Y_1Y_2.$$ Write $$x_1=\tr X_1,\quad x_2=\tr X_2,\quad x_3=\tr X_1X_2,$$ $$y_1=\tr Y_1,\quad y_2=
\tr Y_2,\quad y_3=\tr Y_1Y_2.$$

By a similar argument as in the proof of Lemma \ref{L:SO4-2}, one shows that the ring $R_{1}=
\mathbb{C}[H\times H]^{H}$ is generated by \begin{eqnarray*}&& x_1^{2},x_{2}^{2},x_{3}^{2},y_1^{2},y_{2}^{2},
y_{3}^{2},x_{1}y_{1},x_{2}y_{2},x_{3}y_{3},x_{1}x_{2}x_{3},y_{1}y_{2}y_{3},\\&& y_{1}x_{2}x_{3},x_{1}y_{2}x_{3},
x_{1}x_{2}y_{3},x_{1}y_{2}y_{3},y_{1}x_{2}y_{3},y_{1}y_{2}x_{3}.\end{eqnarray*}

Let $R_2$ be the sub-ring of $R_1$ generated by word maps of invariant functions on $G$. Write $w=(k_1,l_1,
\dots,k_{s},l_{s})$ for a word. By Lemma \ref{L:SO4-2}, $w$ gives invaraint functions $$f_{w}=(\tr X_{w})^{2},
g_{w}=(\tr Y_{w})^{2},h_{w}=\tr X_{w}\tr Y_{w}$$ where $$X_{w}=X_{1}^{k_1}X_{2}^{l_a}\cdots X_{1}^{k_{s}}X_{2}^{l_{s}},
\quad Y_{w}=Y_{1}^{k_1}Y_{2}^{l_a}\cdots Y_{1}^{k_{s}}Y_{2}^{l_{s}}.$$

For a word $w$, write $\epsilon=(-1)^{\sum_{1\leq i\leq s}k_{i}}$ and $\epsilon'=(-1)^{\sum_{1\leq i\leq s}l_{i}}$.
Then we have $$X_{w}(-X_1,X_2)=\epsilon X_{w}(X_1,X_2),\quad Y_{w}(-Y_1,Y_2)=\epsilon Y_{w}(Y_1,Y_2),$$ $$X_{w}(X_1,
-X_2)=\epsilon'X_{w}(X_1,X_2),\quad Y_{w}(Y_1,-Y_2)=\epsilon'Y_{w}(Y_1,Y_2).$$ We also have $$x_1(-X_1,X_2)=-x_1,\quad
x_2(-X_1,X_2)=x_2,\quad x_{3}(-X_1,X_2)=-x_3,$$ $$x_1(X_1,-X_2)=x_1,\quad x_2(X_1,-X_2)=-x_2,\quad x_{3}(X_1,-X_2)=
-x_3,$$ and similar relations for $y_1,y_2,y_3$. This shows that: $f_{w}$ is a sum of terms of the form
$$x_1^{a_1}x_2^{a_2}x_{3}^{a_3}$$ where $$a_{1}\equiv a_{2}\equiv a_{3} \pmod 2;$$ $g_{w}$ is a sum of terms of the
form $$y_1^{b_1}y_2^{b_2}y_{3}^{b_3}$$ where $$b_{1}\equiv b_{2}\equiv b_{3} \pmod 2;$$ $h_{w}$ is a sum of terms of
the form $$x_1^{a_1}x_2^{a_2}x_{3}^{a_3}y_1^{b_1}y_2^{b_2}y_{3}^{b_3}+x_1^{b_1}x_2^{b_2}x_{3}^{b_3}y_1^{a_1}y_2^{a_2}
y_{3}^{a_3}$$ where $$a_1+b_{1}\equiv a_2+b_{2}\equiv a_3+b_{3} \pmod 2.$$ By this, $f_{w}$ is in the ring generated by $x_1^{2},x_2^{2},x_3^{2},x_1x_2x_3$, and $g_{w}$ is in the ring generated by $y_1^{2},y_2^{2},y_3^{2},y_1y_2y_3$.

\begin{proposition}\label{P:SO4-1}
We have $R_1\neq R_2$.
\end{proposition}

\begin{proof}
Make the polynomial algebra $R$ a graded algebra by letting each of $x_1,x_2,x_3,y_1,y_2,y_3$ having degree 1. Write
$(R_{i})_{k}$ for the degree $k$ part of $R_{i}$. By the above description for $R_{1}$ and $R_{2}$, one has
$\dim (R_1)_{3}=8$ and $\dim (R_2)_{3}\leq 5$. Thus, $R_{2}$ is a proper subalgebra of $R_1$.
\end{proof}

Prop. \ref{P:SO4-1} implies the following.

\begin{corollary}
There are invariants in $\mathbb{C}[\SO_{4}\times\SO_{4}]^{\SO_4}$ which are not generated by 1-argument invariants.
\end{corollary}

In the $\SO_{4}$ case as above, $R_{2}$ contains the ring generated by \begin{eqnarray*}&& x_1^{2},
x_{2}^{2},x_{3}^{2},y_1^{2},y_{2}^{2},y_{3}^{2},x_{1}y_{1},x_{2}y_{2},x_{3}y_{3},x_{1}x_{2}x_{3},y_{1}y_{2}y_{3},\\&&
y_{1}x_{2}x_{3}+x_{1}y_{2}y_{3},x_{1}y_{2}x_{3}+y_{1}x_{2}y_{3},x_{1}x_{2}y_{3}+y_{1}y_{2}x_{3}.\end{eqnarray*} From
this, we see that the fractional fields of $R_1$ and $R_{2}$ are equal. Since $R_{1}$ is defined by invariants of a
group action on polynomial ring, it follows that it is a normal ring. Thus, $R_{1}$ is the integral closure of $R_{2}$.
This might be a general fact. It is interesting to determine $R_2$ fully. Particularly, do we have $(R_1)_{k}=(R_{2})_{k}$
whenever $k\neq 3$? More complete knowledge concerning invariants in $\mathbb{C}[\SO_{4}^{n}]^{\SO_{4}}$ is contained in
\cite{Aslaksen-Tan-Zhu}.

\begin{question}\label{Q:G2}
Let $G=\G_{2}(\mathbb{C})$ be a connected complex simple Lie group of type $\mathbf{G}_2$. Are invariants in $\mathbb{C}[G^{n}]^{G}$
generated by 1-argument invariants for any $n>1$.
\end{question}

The answer to Question \ref{Q:G2} is perhaps negative. Note that for $G=\G_{2}(\mathbb{C})$, invariants in
$\mathbb{C}[G^{n}]^{G}$ are determined by Schwarz (\cite{Schwarz}).


\begin{thebibliography}{99}
\bibitem{Aslaksen-Tan-Zhu}H.~Aslaksen; E.-C.~Tan, C.B.~Zhu, \emph{Invariant theory of special orthogonal groups.} Pacific
J. Math. \textbf{168} (1995), no. 2, 207-215.

\bibitem{Arthur}J.~Arthur, \emph{A note on the automorphic Langlands group.} Dedicated to Robert V. Moody.
Canad. Math. Bull. \textbf{45} (2002), no. 4, 466-482.

\bibitem{Blasius}D.~Blasius, \emph{On multiplicities for $\SL(n)$.} Israel J. of Math. \textbf{88} (1994) 237-251.

\bibitem{Bockle-Harris-Khare-Thorne}G.~B\"ockle; M.~Harris; C.~Khare; J.~Thorne, \emph{$\hat{G}$-local systems on smooth
projective curves are potentially automorphic.} Acta Math. \textbf{223} (2019), no. 1, 1-111.


\bibitem{Borel-Friedman-Morgan}A.~Borel; R.~Friedman; J.~Morgan, \emph{Almost commuting elements in compact Lie groups.}
Memoirs of the American Mathematical Society, \textbf{157}, no.747 (2002).

\bibitem{Chenevier}G.~Chenevier, \emph{Subgroups of $\Spin(7)$ or $\SO(7)$ with each element conjugate to some element of
$G_{2}$ and applications to automorphic forms.} Doc. Math. \textbf{24} (2019), 95-161.

\bibitem{Chenevier-Gan} G.~Chenevier; W.T.~Gan, \emph{Email communication}.


\bibitem{Fang-Han-Sun}Y.J.~Fang; G.~Han; B.Y.~Sun, \emph{Conjugacy and element-conjugacy of homomorphisms of compact
Lie groups.} Pacific J. Math. \textbf{283} (2016), no. 1, 75-83.

\bibitem{Griess}R.~Griess, \emph{Basic conjugacy theorems for $G_2$.} Invent. Math. \textbf{121} (1995), no. 2,
257-277.

\bibitem {Huang-Yu} J.-S.~Huang; J.~Yu, \emph{Klein four subgroups of Lie algebra
automorphisms.} Pacific J. Math. \textbf{262} (2013), no. 2, 397-420.

\bibitem {Knapp} A.~Knapp, \emph{Lie groups beyond an introduction.} Second edition. Progress in Mathematics,
\textbf{140}. Birkh\"auser Boston, Inc., Boston, MA, 2002.

\bibitem{Lafforgue} V.~Lafforgue, \emph{Introduction to chtoucas for reductive groups and to the global Langlands
parameterization.} arXiv:1404.6416[math.AG].

\bibitem{Lafforgue2} V.~Lafforgue, \emph{Chtoucas pour les groupes r\'eductifs et param\'etrisation de Langlands
globale.} J. Amer. Math. Soc. \textbf{31} (2018), no. 3, 719-891.

\bibitem{Lapid}E.~Lapid, \emph{Some results on multiplicities for $\SL(n)$.} Israel J. Math. \textbf{112} (1999), 157-186.

\bibitem{Lapid-Mao}E.~Lapid; Z.Y.~Mao, \emph{A conjecture on Whittaker-Fourier coefficients of cusp forms.} J. Number Theory
\textbf{146} (2015), 448-505.

\bibitem{Larsen1}M. Larsen, \emph{On the conjugacy of element-conjugate homomorphisms.} Israel J. Math.
\textbf{88} (1994), no. 1-3, 253-277.

\bibitem{Larsen2}M. Larsen, \emph{On the conjugacy of element-conjugate homomorphisms. II.} Quart. J. Math.
Oxford Ser. (2) \textbf{47} (1996), no. 185, 73-85.


\bibitem{Procesi1}C.~Procesi, \emph{Lie Groups: An Approach through Invariants and Representations.} Universitext,
Springer-Verlag, New York, 2007.

\bibitem{Procesi2}C.~Procesi, \emph{The invariant theory of $n\times n$ matrices.} Advances in Math. \textbf{19} (1976),
no. 3, 306-381.


\bibitem{Schwarz} G.~Schwarz, \emph{Invariant theory of $G_2$ and $\Spin_{7}$.} Comment. Math. Helv. \textbf{63} (1988),
no. 4, 624-663.

\bibitem{Steinberg}R.~Steinberg, \emph{Endomorphismsoflinearalgebraicgroups.} Memoirsof the AMS, No.80. American
Mathematical Society, Providence, RI (1968).

\bibitem{Taylor}R.~Taylor, \emph{Galois representations associated to Siegel modular forms of low weight.} Duke
Math. J. \textbf{63} (1991), no. 2, 281-332.

\bibitem{Wang}S.~Wang, \emph{On dimension data and local vs. global conjugacy.} Fifth International Congress of Chinese
Mathematicians. Part 1, 2, 365-382, AMS/IP Stud. Adv. Math., \textbf{51}, Amer. Math. Soc., Providence, RI, 2012.

\bibitem {Weidner} M.~Weidner, \emph{Pseudocharacters of classical groups.} Transformation Groups \textbf{25} (2020),
no. 4, 1345-1370.


\bibitem {Zhu} X.W.~Zhu, \emph{Personal communication.}
\end{thebibliography}
\end{document}